\newtheorem{thm}{Theorem}%[section]
\newtheorem*{thmA}{Theorem A}
\newtheorem*{thmB}{Theorem B}
\newtheorem{lemma}{Lemma}%[section]
\newtheorem{df}{Definition}%[section]
\newtheorem{prop}{Proposition}%[section]
\newcommand{\lag}{\mathfrak g}
\newcommand{\lap}{\mathfrak p}
\newcommand{\lasp}{\mathfrak{sp}}
\newcommand{\laosp}{\mathfrak{osp}}
\newcommand{\lagl}{\mathfrak{gl}}
\newcommand{\lasl}{\mathfrak{sl}}
\newcommand{\lat}{\mathfrak{t}}
\newcommand{\mS}{\mathbb S}
\newcommand{\mC}{\mathbb C}
\newcommand{\mR}{\mathbb R}
\newcommand{\mN}{\mathbb N}
\newcommand{\Spin}{{Spin}}
 \newcommand{\cH}{\mathcal{H}}
\newcommand{\cP}{\mathcal{P}}
\newcommand{\cM}{\mathcal{M}}
\newcommand{\Ind}{\operatorname{Ind}}
\newcommand{\lspan}{\operatorname{span}}
\newcommand{\Ker}{\operatorname{Ker}}
\def\pa{\partial}
\begin{document}

\title[Separation of variables]{Separation of variables in the semistable range}

\author{R. L\'avi\v cka}

\address{Charles University, Faculty of Mathematics and Physics, Mathematical Institute\\
Sokolovsk\'a 83, 186 75 Praha, Czech Republic}

\email[R. L\'avi\v cka]{lavicka@karlin.mff.cuni.cz}

\thanks{The support of the grant GACR 17-01171S is gratefully acknowledged.}

\dedicatory{Dedicated to Professor Wolfgang Spr\"o\ss ig}

%%%%%%%%%%%%%%%%%%%%%%%%%%%%%%%%%%

\begin{abstract}
In this paper, we give an alternative proof of separation of variables for scalar-valued polynomials $P:(\mR^m)^k\to\mC$ in the semistable range $m\geq 2k-1$ for the symmetry given by the orthogonal group $O(m)$. 
It turns out that uniqueness of the decomposition of polynomials into spherical harmonics is equivalent to irreducibility of generalized Verma modules for the Howe dual partner $\lasp(2k)$ generated by spherical harmonics.
We believe that this approach might be applied to the case of spinor-valued polynomials and to other settings as well.  
\end{abstract}

\subjclass{30G35, 17B10} % 17B10 = Representation theory (algebraic methods - weights) for Lie (super)algebras
\keywords{Fischer decomposition, separation of variables, spherical harmonics, monogenic polynomials, Dirac equation}

\maketitle

%%%%%%%%%%%%%%%%%%%%%%%%%%%%%%%%%%%%%%%%%

\section{Introduction}

It is well-known that each polynomial $P$ in the Euclidean space $\mR^m$ can be uniquely written as a~finite sum
$$P=H_0+r^2H_1+\cdots+r^{2j}H_j+\cdots$$
where $r^2=x_1^2+\cdots+x_m^2$ for $x=(x_1,\ldots,x_m)\in\mR^m$ and $H_j$ are harmonic polynomials in $\mR^m$, that is, $\Delta H_j=0$ for the Laplace operator $$\Delta=\pa_{x_1}^2+\cdots+\pa_{x_m}^2.$$
In other words, the space $\cP$ of $\mC$-valued polynomials on $\mR^m$ decomposes as
$$\cP=\bigoplus_{n=0}^{\infty} r^{2n} \cH$$ 
where $\cH=\Ker(\Delta) \cap\cP$ is the space of spherical harmonics in $\mR^m$. 
This result is known as separation of variables or the Fischer decomposition.
The underlying symmetry is given by the orthogonal group $O(m)$. 
The invariant operators $\Delta, r^2, h$ generate the Lie algebra $\lasl(2)$ 
where $$h=x_1\pa_{x_1}+\cdots+x_m\pa_{x_m} + m/2$$ is the Euler operator.  
This is the so-called hidden symmetry in this case.
Actually, it is a~simple example of Howe duality for {Howe dual pair} $(O(m),\lasl(2))$. 

For separation of variables and Howe duality in various cases and 
for other symmetry groups, see e.g.\ \cite{CKW, CW, C, Cou,G,Ho1,Ho2,KV,L,LS,LvS,Som1}.

For example, spinor valued polynomials in one variable of $\mR^m$ decompose into monogenic polynomials \cite{DSS}. Let $\mS$ be an irreducible spin representation of the group $Pin(m)$, the double cover of $O(m)$. The spinor space $\mS$ is usually realized inside the complex Clifford algebra $\mC_m$ generated by the generators $e_1,\ldots,e_m$ satisfying the relations $e_i e_j+e_j e_i=-2\delta_{ij}$. The Euclidean space $\mR^m$ is embedded into $\mC_m$ as $$(x_1,\ldots,x_m)\to x_1e_1+\cdots+x_me_m.$$ 
A~polynomial $P:\mR^m\to\mS$ is called monogenic if it satisfies the equation $\pa P=0$ where 
$$\pa:=\sum_{i=1}^m e_i\partial_{x_i}$$
is the Dirac operator in $\mR^m$. 
Then each polynomial $P:\mR^m\to\mS$ has a~unique expression as a~finite sum
$$P=M_0+x M_1+\cdots+x^j M_j+\cdots$$
where $x=x_1e_1+\cdots+x_me_m$ is the vector variable of $\mR^m$ and $M_j$ are monogenic polynomials in $\mR^m$.
Then the space of spinor-valued polynomials on $\mR^m$ decomposes as
$$\cP\otimes\mS=\bigoplus_{n=0}^{\infty} x^{n} \cM$$ 
where $\cM=\Ker(\pa) \cap (\cP\otimes\mS)$ is the space of spherical monogenics in $\mR^m$. 
The symmetry is given by the group $Pin(m)$ and 
the invariant operators $\pa, x$ generate the Lie superalgebra $\laosp(1|2)$. 
This is Howe duality $(Pin(m),\laosp(1|2))$. 

The case of scalar or spinor valued polynomials in more variables is more interesting and involved.

\subsection{Scalar-valued polynomials}

Let us start with the scalar case which has been studied for a~long time and is well-understood.
We consider $\mC$-valued polynomials in $k$ vector variables $x^i$ of $\mR^m$.
Here $x^i=(x^i_1,\ldots,x^i_m)\in\mR^m$ for $i=1,\ldots,k$. We take a~natural action of $O(m)$ on the space $\cP$ of polynomials $P:(\mR^m)^k\to\mC$. Then the invariant operators
$$\Delta_{ij}=\pa_{x^i_1}\pa_{x^j_1}+\cdots+\pa_{x^i_m}\pa_{x^j_m},\ \ r^2_{ij}=x^i_1x^j_1+\cdots+x^i_mx^j_m$$  
$$h_{ij}=x^i_1\pa_{x^j_1}+\cdots+x^i_m\pa_{x^j_m} + (m/2)\delta_{ij},\ \  i,j=1,\ldots,k$$
generate the Lie algebra $\lasp(2k)$, and the mixed Euler operators $h_{ij}$ its subalgebra $\lagl(k)$.
This case is indeed Howe duality $(O(m),\lasp(2k))$.
Spherical harmonics are the polynomials in the kernel of all the mixed laplacians $\Delta_{ij}$. Thus we denote
$$\cH=\Ker(\Delta_{ij}, 1\leq i\leq j\leq k)\cap\cP.$$ Let us remark that $\Delta_{ij}=\Delta_{ji}$ and $r^2_{ij}=r^2_{ji}$.
Theorem A below describes, in a~semistable range $m\geq 2k-1$,  a~decomposition of polynomials into spherical harmonics we can view as a~proper generalization of the harmonic Fischer decomposition to more variables.

\begin{thmA}
If $m\geq 2k-1$, then
$$\cP=\bigoplus_{n} r^{2n} \cH \text{\ \ \ \ with\ \ }r^{2n}=\prod_{1\leq i\leq j\leq k}r_{ij}^{2n_{ij}}$$
where the sum is taken over all $n=\{n_{ij},\ 1\leq i\leq j\leq k\}\subset\mN_0$.
\end{thmA}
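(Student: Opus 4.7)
The plan is to reduce Theorem A to an irreducibility statement for certain generalized Verma modules of the Howe-dual partner $\lasp(2k)$. Recall that the operators $\Delta_{ij}$, $r^2_{ij}$ and $h_{ij}$ span $\lasp(2k)$ with triangular decomposition $\lasp(2k)=\lan^-\oplus\lagl(k)\oplus\lan^+$, where $\lan^+=\lspan\{\Delta_{ij}\}_{i\leq j}$ and $\lan^-=\lspan\{r^2_{ij}\}_{i\leq j}$, and the Levi $\lagl(k)$ is spanned by the mixed Euler operators. Let $\lap=\lagl(k)\oplus\lan^+$ be the associated maximal parabolic. By definition $\cH$ is the subspace of $\lan^+$-invariants in $\cP$; it is preserved by $O(m)\times\lagl(k)$ and in the semistable range it decomposes multiplicity-freely as $\cH=\bigoplus_\lambda V_\lambda\otimes E_\lambda$, with $\lambda$ running over partitions of length at most $k$ and each $E_\lambda$ a finite-dimensional irreducible $\lagl(k)$-module.

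First I would establish \emph{existence}, $\cP=\sum_n r^{2n}\cH$, by induction on the total polynomial degree: it suffices to decompose each homogeneous piece as $\cH+\sum_{i\leq j}r^2_{ij}\cP$, which is the classical Fischer argument via the positive-definite pairing $(P,Q)\mapsto[P(\partial)\overline{Q}](0)$ making multiplication by $r^2_{ij}$ adjoint to $\Delta_{ij}$. For \emph{directness}, fix a $\lagl(k)$-irreducible summand $E\subset\cH$, extend trivially by $\lan^+$ to a $\lap$-module, and form the generalized Verma module $M(E)=U(\lasp(2k))\otimes_{U(\lap)}E$. By PBW, $M(E)\cong S(\lan^-)\otimes E$ as a vector space, and the canonical $\lasp(2k)$-homomorphism $\varphi_E\colon M(E)\to\cP$, $1\otimes e\mapsto e$, has image exactly $\sum_n r^{2n}E$; hence directness of this sum is equivalent to injectivity of $\varphi_E$. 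Since $M(E)$ is a highest-weight module with a unique maximal proper submodule and $\varphi_E$ is non-zero, $\varphi_E$ is injective if and only if $M(E)$ is simple.

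The main obstacle is therefore to prove that every $M(E_\lambda)$ is simple whenever $m\geq 2k-1$. The strategy is to compute the $\lagl(k)$-highest weight of $E_\lambda$ explicitly as a function of $\lambda$ and $m/2$, and then apply Jantzen's irreducibility criterion for parabolic Verma modules (equivalently, the Shapovalov/Kac--Kazhdan determinant formula for $\lasp(2k)$). The inequality $m\geq 2k-1$ is precisely what forces the shifted highest weight of every $E_\lambda$ occurring in $\cH$ to lie strictly outside the reducibility locus, so that no Bernstein--Gelfand--Gelfand embedding $M(E_{\lambda'})\hookrightarrow M(E_\lambda)$ can arise and $M(E_\lambda)$ is simple. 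Combining everything, $\cP=\sum_E\varphi_E(M(E))$ with each $\varphi_E$ injective; by the multiplicity-freeness of $\cH$ together with the $O(m)$-invariance of the $r^2_{ij}$, the images of distinct $\varphi_E$ lie in distinct $O(m)$-isotypic components of $\cP$, so the total sum is direct and $\cP=\bigoplus_n r^{2n}\cH$.
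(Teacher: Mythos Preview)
Your proposal is correct and follows essentially the same three-step strategy as the paper: existence via the Fischer inner product, the $O(m)\times\lagl(k)$ decomposition of $\cH$ into pieces $\cH^S_a\otimes F_{\tilde a}$, and reduction of directness to irreducibility of the associated generalized Verma modules for $\lasp(2k)$, which is then verified via the standard Humphreys/Jantzen criterion under the hypothesis $m\geq 2k-1$. The only differences are cosmetic (you use highest-weight conventions with $\lan^+=\lspan\{\Delta_{ij}\}$, whereas the paper uses lowest-weight conventions with $\lap_-=\lspan\{\Delta_{ij}\}$), and note that the decomposition of $\cH$ you quote actually holds in all ranges, not only the semistable one.
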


This result at least in the stable range $m\geq 2k$ is well-known in invariant theory and theory of Howe duality, see \cite{Ho2}. 
In the next section, we give an alternative proof and extend the result even to the semistable range $m\geq 2k-1$.
The non-stable range is much more complicated and less understood. 

\subsection{Spinor-valued polynomials}

The form of the Fischer decomposition for spinor-valued polynomials  in the semistable range was conjectured by F.~Colombo, F. Sommen, I. Sabadini, D. Struppa in 2004 in the book \cite{CSSS}. Before recalling this, let us introduce some notations.
On the space $\cP\otimes\mS$ of polynomials $P:(\mR^m)^k\to\mS$, there is a~natural action of the group $Pin(m)$.  
In this case, we have $k$ vector variables $x^i\in\mR^m$,
$$x^i=e_1 x^i_1+\cdots+e_m x^i_m$$
and $k$ corresponding Dirac operators $\pa^i$,
$$\pa^i =e_1\pa_{x^i_1}+\cdots+e_m\pa_{x^i_m}$$ 
for $i=1,\ldots,k$. Then the odd invariant operators $x^i,\pa^i$ generate the Lie superalgebra $\laosp(1|2k)$,
and its even part $\lasp(2k)$ is generated by the 'scalar' operators $\Delta_{ij}, r^2_{ij}, h_{ij}$ we know from the scalar case.  
The role of spherical harmonics is played by spherical monogenics, that is, polynomial solutions $P:(\mR^m)^k\to\mS$ of the system of all the Dirac equations $\pa^i P=0$ for $i=1,\ldots,k$. Denote $$\cM=\Ker(\pa^1,\ldots,\pa^k)\cap(\cP\otimes\mS).$$
Finally,  for $J\subset\{1,2,\ldots,k\}$, put $x^J=x^{j_1}\cdots x^{j_r}$ where $J=\{j_1,\cdots,j_r\}$ and $j_1<\cdots<j_r$. 
Here $x^{\emptyset}:=1$. Then we have the following result.

\begin{thmB}
If $m\geq 2k$, then
$$\cP\otimes\mS=\bigoplus_{n, J} r^{2n} x^J \cM$$
where the sum is taken over all $J\subset\{1,\dots,k\}$ and $n=\{n_{ij},\ 1\leq i\leq j\leq k\}\subset\mN_0$.
\end{thmB}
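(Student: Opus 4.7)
The strategy is to imitate the proof of Theorem A with the appropriate super analog: the Howe dual partner becomes $\laosp(1|2k)$, harmonics are replaced by monogenics, and the claimed decomposition is reinterpreted as freeness of a generalized Verma module. Inside $\laosp(1|2k)$, the multiplications by $r^2_{ij}$ and $x^i$ span the negative nilradical $\lan^-$: the $r^2_{ij}$ are even and mutually commute, while the $x^i$ are odd and mutually anticommute. By PBW for Lie superalgebras, the ordered monomials $r^{2n}x^J$ (with $J\subset\{1,\ldots,k\}$ listed in increasing order and $n=\{n_{ij}\}\subset\mN_0$) form a linear basis of $\mathcal{U}(\lan^-)$. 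Spherical monogenics are annihilated by the positive nilradical $\lan^+$ spanned by $\Delta_{ij}$ and $\pa^i$, so Theorem B is equivalent to the assertion that the natural evaluation map
\[
\Phi\colon \mathcal{U}(\lan^-)\otimes_{\mC}\cM \longrightarrow \cP\otimes\mS,\qquad u\otimes M\longmapsto u\cdot M
\]
is a linear isomorphism.

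For surjectivity I would reduce to Theorem A. Tensoring that theorem with $\mS$ gives $\cP\otimes\mS=\bigoplus_n r^{2n}(\cH\otimes\mS)$, so it suffices to show that every element of $\cH\otimes\mS$ lies in $\sum_J x^J\cM$. The key relations $\pa^i x^j+x^j\pa^i=-2h_{ij}$ and $\pa^i\pa^j+\pa^j\pa^i=-2\Delta_{ij}$ of $\laosp(1|2k)$ allow an induction on total polynomial degree: given $P\in\cH\otimes\mS$ not already monogenic, some $\pa^i P$ is a nonzero element of strictly lower degree which, by induction, is a combination $\sum_J x^J M_J$; reconstructing $P$ itself then proceeds by peeling off the leading $x^J$-component using the invertibility of the appropriate $h_{ij}$-combinations on each homogeneous piece (this is where $m\geq 2k$ enters on the surjectivity side).

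The main obstacle is injectivity of $\Phi$. Following the philosophy announced in the abstract, I would translate it into irreducibility of the generalized Verma module $V(\cM):=\mathcal{U}(\laosp(1|2k))\otimes_{\mathcal{U}(\lap)}\cM$, where $\lap=\lah\oplus\lan^+$. Decomposing $\cM$ into its $\lagl(k)$-isotypic components reduces the problem to irreducibility of parabolic Verma modules whose highest weights depend linearly on $m$. A Shapovalov-type determinant computation should show that the relevant reducibility factors do not vanish in the range $m\geq 2k$, so no singular vector appears outside $\cM$ and $\Phi$ is injective. Tracking the sign conventions in the Shapovalov form for the orthosymplectic superalgebra and checking that the threshold $m\geq 2k$ is exactly where all such obstructions disappear is where the technical weight of the argument lies; everything else is formal.
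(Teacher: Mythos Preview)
The paper does not prove Theorem~B. It quotes the statement and says explicitly that it ``was recently proved in \cite{LS} using the harmonic Fischer decomposition''; the Verma-module method is then advertised only as a hoped-for route to the semistable extension (``we believe that, using this approach, Theorem~B might be proved in the full semistable range by studying the structure of generalized Verma modules for $\laosp(1|2k)$''). So there is no proof in this paper to compare your proposal against---you have written a sketch of exactly the programme the author leaves open, not a reconstruction of the cited argument.

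On the substance of your sketch, two points are genuine gaps rather than routine bookkeeping. First, the surjectivity induction does not close: from $\pa^i P=\sum_J x^J M_J$ you have no mechanism to recover $P$ itself inside $\sum_J x^J\cM$. ``Peeling off the leading $x^J$-component using invertibility of $h_{ij}$-combinations'' is not an argument; the map $P\mapsto(\pa^1 P,\ldots,\pa^k P)$ has kernel $\cM$ and no evident right inverse compatible with the filtration you want. The clean way (and the one \cite{LS} is said to follow) is the Fischer inner product, which gives $\cP\otimes\mS=\cM\oplus\sum_i x^i(\cP\otimes\mS)$ directly, after which iteration plus the anticommutation $x^ix^j+x^jx^i=-2r^2_{ij}$ yields the spanning statement with no induction on degree. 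Second, for injectivity you invoke a Shapovalov-type determinant for the $\laosp(1|2k)$ parabolic and assert that the reducibility locus avoids $m\geq 2k$. That computation is the entire content of the problem; the analogue of Proposition~\ref{p_irred} for this orthosymplectic parabolic is not in \cite{hum}, and the paper itself does not know whether the cutoff is $2k$ or $2k-1$. Naming the determinant is not the same as evaluating it.
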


The decomposition of Theorem B was conjectured in \cite{CSSS} but even in the semistable range $m\geq 2k-1$.   
Theorem B (that is, this decomposition only in the stable range)  was recently proved in \cite{LS} using the harmonic Fischer decomposition. For the case of two variables, see \cite{L}.
Since the harmonic Fischer decomposition is now extended to the semistable range there is a~hope that the conjecture could be proved in the full semistable range as well.  

It is known \cite{CKW} that, in $\cP\otimes\mS$, the isotypic components of $\Spin(m)$ form irreducible lowest weight modules for $\laosp(1|2k)$ with lowest weights $(a_1+(m/2),\ldots,a_k+(m/2))$ for integers $a_1\geq \cdots\geq a_k\geq 0$ when the dimension $m$ is even. 
Let us remark that there are not so many known explicit realizations of such modules, see e.g.\ the paraboson Fock space \cite{LSJ}.   
For a~classification of such modules, we refer to \cite{DZ,DS}. 

\section{Proof of Theorem A}

In this section, we prove the harmonic Fischer decomposition in the semistable range. We divide the proof into 3 steps. The first two steps are quite standard. In the last step, to show uniqueness of the decomposition of polynomials we study irreducibility of generalized Verma modules for the Howe dual partner $\lasp(2k)$ generated by spherical harmonics. This approach seems to be very flexible and to work well in other settings. In particular, we believe that, using this approach, Theorem B might be proved in the full semistable range by studying the structure of generalized Verma modules for $\laosp(1|2k)$.

\subsection*{Step 1: Decomposition into spherical harmonics}

First we show that each polynomial can be expressed in  terms of spherical harmonics. This is easy. But as we shall see the question of uniqueness of such an expression is more difficult.

\begin{lemma}\label{l_harmdecomp} (i) We have
$$\cP=\cH\oplus\sum_{1\leq i\leq j\leq k}r_{ij}^2\,\cP$$

\noindent\smallskip
(ii) We have 
\begin{equation}\label{e_decomp}
\cP=\sum_{n} r^{2n} \cH \text{\ \ \ \ with\ \ }r^{2n}=\prod_{1\leq i\leq j\leq k}r_{ij}^{2n_{ij}}
\end{equation}
where the sum is taken over all $n=\{n_{ij},\ 1\leq i\leq j\leq k\}\subset\mN_0$.
\end{lemma}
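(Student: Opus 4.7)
The plan is to handle (i) by the classical Fischer-inner-product argument, and then derive (ii) from (i) by induction on total degree.

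For (i), I would introduce the Fischer Hermitian form on $\cP$,
$$\langle P,Q\rangle_F := \bigl[\overline{Q}(\partial)\,P\bigr]\big|_{\ux=0},$$
where $\overline{Q}(\partial)$ denotes the differential operator obtained from $Q$ by complex-conjugating its coefficients and replacing each variable $x^i_a$ by $\partial_{x^i_a}$. A direct computation on monomials shows that $\langle\cdot,\cdot\rangle_F$ is positive definite, with distinct multi-degree components automatically orthogonal, and that multiplication by $x^i_a$ is adjoint to $\partial_{x^i_a}$. Summing over $a$ yields the crucial adjoint relation $(r_{ij}^2)^{*}=\Delta_{ij}$. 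Writing $\xI:=\sum_{1\leq i\leq j\leq k}r_{ij}^2\,\cP$, the adjointness gives
$$\xI^{\perp}\;=\;\bigcap_{1\leq i\leq j\leq k}\Ker(\Delta_{ij})\;=\;\cH,$$
and since the form is positive definite on each finite-dimensional homogeneous component, we conclude $\cP=\cH\oplus\xI$, proving (i). Notice that this argument is independent of the dimensional hypothesis $m\geq 2k-1$.

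Part (ii) then follows by induction on total degree. For a homogeneous polynomial $P$ of degree $d$, part (i) yields
$$P \;=\; H \;+\; \sum_{1\leq i\leq j\leq k} r_{ij}^2\,Q_{ij}$$
with $H\in\cH$ and each $Q_{ij}$ homogeneous of degree $d-2$. Applying the inductive hypothesis to every $Q_{ij}$ and substituting delivers the expansion~(\ref{e_decomp}); the base case $d<2$ is trivial since then $P\in\cH$. Termination is guaranteed because each $r_{ij}^2$ raises degree by $2$.

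It is worth stressing what this lemma does \emph{not} say. It gives existence of a representation $P=\sum_n r^{2n}H_n$ with $H_n\in\cH$, but makes no claim of uniqueness. Uniqueness, i.e.\ directness of the sum~(\ref{e_decomp}) in the semistable range, is precisely the content of Theorem A, and that is where the actual obstacle lies; I would expect it to be attacked along the lines indicated in the paper's introduction, namely by analysing irreducibility of the generalised Verma modules for $\lasp(2k)$ generated by spherical harmonics.
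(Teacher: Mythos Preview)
Your proposal is correct and follows essentially the same approach as the paper: the paper's proof of (i) simply states that this is the orthogonal decomposition with respect to the Fischer inner product (defined exactly as you do, up to the order of arguments), and for (ii) it says ``Use repeatedly (i)'', which is your induction on degree. Your version just makes the adjointness $(r_{ij}^2)^*=\Delta_{ij}$ and the identification $\xI^\perp=\cH$ explicit, and your closing remark about uniqueness being deferred to Theorem~A matches the paper's own comment after the lemma.
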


\begin{proof} 
(i) This is an orthogonal decomposition with respect to the Fischer inner product on the space $\cP$ of polynomials.
Here the Fischer inner product is defined, for $P,Q\in\cP$, by
$$(P,Q)=[\overline{P(\pa)}(Q(x))]_{x=0}$$ 
where $\bar z$ is the complex conjugation of $z\in\mC$ and $P(\pa)$ denotes the constant coefficient differential operator obtained by substituting derivatives $\pa_{x^i_j}$ for the variables ${x^i_j}$.
(ii) Use repeatedly (i).
\end{proof}

It is known that, in general, the sum \eqref{e_decomp}  is {not direct}. In Step 3, we show that this sum is direct in the semistable range.

\subsection*{Step 2: Decomposition of spherical harmonics}

It is easy to see that the space $\cH$ of spherical harmonics is invariant with respect not only to the symmetry group $O(m)$ but also to the Lie algebra $\lagl(k)$ generated by the mixed Euler operators $h_{ij}$. Before describing an irreducible decomposition of $\cH$ under the joint action of $O(m)\times\lagl(k)$ let us recall some notations.

A~partition $a=(a_1,\ldots,a_m)$ of the length at most $m$ is a~non-negative integer sequence $a_1\geq a_2\geq \cdots\geq a_m\geq 0$. 
With a~partition $a$ we often identify the corresponding Young diagram, that is, the array of square boxes arranged in left-justified horizontal rows and with row $i$ having just $a_i$ boxes.  Then there is a one-to-one correspondence between finite-dimensional irreducible representations of $O(m)$ and partitions $a$ satisfying the condition  
\begin{equation}\label{e_ortho}
(a')_1+(a')_2\leq m. 
\end{equation}
Here $a'$ denotes the transpose of the Young diagram $a$ and thus the condition \eqref{e_ortho} means that the sum of the first two columns of the Young diagram $a$ is at most $m$.  
See \cite[Sections 5.2.2, 10.2.4 and 10.2.5]{GW} for details. 

Finite-dimensional irreducible representations of $\lagl(k)$ are indexed by highest weights. In what follows, we use the triangular decomposition
$$\lagl(k)=\lat_-\oplus\lat_0\oplus\lat_+$$ with
$\lat_-=\lspan\{h_{ij}, i<j\}$, $\lat_0=\lspan\{ h_{ij}, i=j\}$, $\lat_+=\lspan\{h_{ij}, i>j\}$.

\begin{thm}\label{t_harm} Under the joint action of $O(m)\times\lagl(k)$, we have an irreducible decomposition
$$
\ \ \cH=\bigoplus_a\;\cH^S_{a}\otimes F_{\tilde a}
$$
where the sum is taken over all partitions $a=(a_1,\ldots,a_k)$ of the length at most $k$ and satisfying the condition \eqref{e_ortho} above, % $(a')_1+(a')_2\leq m$,
$\cH^S_a$ is $O(m)$-irreducible module with the label $a$ and 
$F_{\tilde a}$ is $\lagl(k)$-irreducible module with the highest weight
$\tilde a=(a_1+m/2,\ldots,a_k+m/2)$.
\end{thm}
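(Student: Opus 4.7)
The plan is to combine the invariance of $\cH$ under $\lagl(k)$ with classical Howe duality, identifying the joint $O(m)\times\lagl(k)$-decomposition through explicit highest weight vectors together with a dimension count.

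First I would verify that $\cH$ is $\lagl(k)$-stable. A direct computation from the definitions gives
$$[h_{kl},\Delta_{ij}]=-\delta_{ki}\Delta_{jl}-\delta_{kj}\Delta_{il},$$
so $\Delta_{ij}(h_{kl}P)=0$ for any $P\in\cH$. Since the $O(m)$-action on the Euclidean index commutes with the $\lagl(k)$-action on the variable index, $\cH$ is naturally an $O(m)\times\lagl(k)$-module, and by semisimplicity of $O(m)$ it decomposes as $\cH=\bigoplus_a \cH^S_a\otimes M_a$ with a residual $\lagl(k)$-action on each multiplicity space $M_a$. The problem reduces to identifying each $M_a$ with the irreducible $\lagl(k)$-module $F_{\tilde a}$.

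To do so I would appeal to classical $(O(m),GL(k))$-Howe duality on the matrix space $\Mat_{m\times k}(\mC)$, or equivalently combine the Cauchy identity for $(GL(m),GL(k))$ with Littlewood's branching from $GL(m)$ to $O(m)$, restricted to the harmonic part. In the stable range these tools yield a multiplicity-free decomposition indexed exactly by partitions $a$ of length $\leq k$ with $(a')_1+(a')_2\leq m$. An explicit joint highest weight vector in each $\cH^S_a\otimes M_a$ can then be produced as a product of determinants of leading minors of the matrix $(x^i_j)$: such a product is harmonic (by multilinearity and antisymmetry of the determinant), is an $O(m)$-highest weight vector of type $a$, and is a $\lagl(k)$-highest weight vector of weight $\tilde a$. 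Cyclicity under $\lat_-$ together with a dimension comparison then forces $M_a\cong F_{\tilde a}$.

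The main obstacle is pushing this beyond the strict stable range $m\geq 2k$ into the semistable range $m\geq 2k-1$: Littlewood branching and the standard Howe duality statements are usually formulated in the stable range, and one must check that the combinatorics of the parametrization and the irreducibility of each $M_a$ persist at $m=2k-1$, where the constraint $(a')_1+(a')_2\leq m$ becomes tight. In view of the author's remark that the first two steps are standard, I expect this is handled either by invoking a known extension or by a short direct verification based on the explicit highest weight vectors above.
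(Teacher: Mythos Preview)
The paper does not give its own proof of this theorem; it simply cites Howe \cite[3.6, pp.~37--40]{Ho2} (and \cite{CLS} for the construction of highest weight vectors). Your outline---checking $\lagl(k)$-invariance of $\cH$ via $[h_{kl},\Delta_{ij}]$, decomposing into $O(m)$-isotypic components, and identifying the multiplicity spaces by exhibiting joint highest weight vectors as products of leading minors of $(x^i_j)$---is essentially the standard argument one finds there, so in that sense your approach agrees with what the paper invokes.

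One point does need correcting: your last paragraph treats the passage from the stable to the semistable range as the main obstacle for this theorem, but the paper explicitly states that Theorem~\ref{t_harm} holds \emph{in general}, for all $m$ and $k$, not only in the (semi)stable range. The condition $(a')_1+(a')_2\le m$ is precisely the parametrization of irreducible $O(m)$-modules and takes care of the bookkeeping uniformly; no separate argument at $m=2k-1$ is required here. The semistable restriction $m\ge 2k-1$ enters only in Step~3 of the proof of Theorem~A, where it guarantees irreducibility of the generalized Verma modules $V_{\tilde a}$ for $\lasp(2k)$. So your concern is misplaced for this statement, though it is exactly the right concern for the subsequent step.
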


%\begin{remark}
This theorem is true in general, not only in the semistable or stable range. 
Actually, in the case when $k\geq m$, we can realize each finite-dimensional irreducible representation of $O(m)$ inside the space $\cH$ of spherical harmonics. Indeed, we have 
$$\cH^S_a=\cH\cap\Ker(\lat_-)\cap\cP_a$$ where
$\cP_a$ is the subspace of polynomials of $\cP$ homogeneous in $x^i$ of degree $a_i$ for each $i=1,\ldots,k$ and $a$ satisfies the condition \eqref{e_ortho} above.
Let us remark that polynomials of $\cH^S=\cH\cap\Ker(\lat_-)$ are sometimes called simplicial harmonics.
For a~proof of Theorem \ref{t_harm} and for a~construction of highest weight vectors, see \cite[3.6, pp. 37-40]{Ho2} and cf.\ \cite{CLS}. 
%\end{remark}

\subsection*{Step 3: Uniqueness of the decomposition}

In the last step, we show uniqueness of the decomposition of polynomials in the semistable range.
By Steps 1 and 2, we know
$$\cP=\sum_{n} r^{2n} \cH \text{\ \ and\ \ }\cH=\bigoplus_a\;\cH^S_{a}\otimes F_{\tilde a}$$
where the sums are taken over all $n=\{n_{ij},\ i\leq j\}\subset\mN_0$ and all partitions $a$ of the length at most $k$ and satisfying the condition \eqref{e_ortho}, respectively.
Then we have
\begin{equation}\label{e_isodecomp}
\cP=\bigoplus_a\;\cH^S_{a}\otimes L_{\tilde a}\text{\ \ with\ \ }L_{\tilde a}=\sum_{n} r^{2n} F_{\tilde a}.
\end{equation}
Here the first sum in \eqref{e_isodecomp} is direct because this is an isotypic decomposition for the group $O(m)$.
Moreover, it is easy to see that $L_{\tilde a}$ is a {lowest weight module} with the lowest weight $\tilde a$ for the Howe dual partner $\lasp(2k)$ generated by the invariant operators. Actually, it is well-known that, using Howe duality ($O(m)$, $\lasp(2k)$), 
the module $L_{\tilde a}$ is even irreducible. But we do not need this fact in our argument.
But what we really need is to observe that $L_{\tilde a}$ is a~quotient of a~generalized Verma module $V_{\tilde a}$ for 
$\lasp(2k)$. In the next section, we introduce generalized Verma modules $V_{\lambda}$ for $\lasp(2k)$ and its parabolic subalgebra suitable for our purposes and find out sufficient conditions on the weight $\lambda$ under which $V_{\lambda}$ is irreducible.
It turns out that uniqueness of the decomposition \eqref{e_decomp} is closely related to the structure of  the modules $L_{\tilde a}$.
Indeed, the sum in $L_{\tilde a}$ of \eqref{e_isodecomp} is {direct} if and only if 
$L_{\tilde a}$ is isomorphic to $V_{\tilde a}$. But we show that, in the semistable range $m\geq 2k-1$, all modules $V_{\tilde a}$ are 
in fact {irreducible} (see Proposition \ref{p_irred} and Example below) and hence
$$\cP=\bigoplus_{n} r^{2n} \cH,$$
which completes the proof of Theorem A.

In the non-stable range, the module $L_{\tilde a}$ is not, in general, isomorphic to $V_{\tilde a}$ but it is just a~unique irreducible quotient of $V_{\tilde a}$. So, even in the non-stable range, the study of the decomposition of polynomials \eqref{e_decomp} is closely related to the structure of the modules $L_{\tilde a}$ and representation theory might help much with this task.

\section{Generalized Verma modules for $\lasp(2k)$}

In this section, we introduce generalized Verma modules for $\lasp(2k)$ we need in Step 3 of the proof of Theorem A.
For an account of generalized Verma modules, we refer to \cite{hum}.

In our setting, the Lie algebra $\lasp(2k)$ is generated by the invariant operators $\Delta_{ij}$, $r^2_{ij}$ and $h_{ij}$. 
We have a~decomposition $\lasp(2k)=\lap_-\oplus\lat\oplus\lap_+$ where
$$\lap_-=\lspan\{ \Delta_{ij},\ 1\leq i\leq j\leq k\},\ \ \lap_+=\lspan\{ r^2_{ij},\ 1\leq i\leq j\leq k\},$$ 
$$\lat=\lspan\{ h_{ij},\ 1\leq i,j\leq k\}.$$
We take a~{parabolic} subalgebra $\lap=\lap_-\oplus\lat$ with its Levi subalgebra $\lat\simeq\lagl(k)$.

\begin{df}
Let $F_{\lambda}$ be a~finite dimensional $\lagl(k)$-irreducible module with the highest weight $\lambda$ such that the action of $\lap_- $ on $F_{\lambda}$ is trivial, that is, $(\lap_-)\cdot F_{\lambda}=0$. Then 
we define {the generalized Verma module} for $\lag=\lasp(2k)$ and its parabolic subalgebra $\lap$ as the induced module
$$V_{\lambda}:=\Ind^{\lag}_{\lap} F_{\lambda}.$$
\end{df}

It is well-known that, at least as vector spaces,
$$V_{\lambda}\simeq\bigoplus_{n} r^{2n} F_{\lambda}.$$
The following proposition \cite[9.12, p.\ 196]{hum} gives sufficient conditions on the weight $\lambda$ under which $V_{\lambda}$ is irreducible.

\begin{prop}\label{p_irred} 
The generalized Verma module $V_{\lambda}$ is irreducible if
\begin{itemize}
\item[(1)] $\lambda_i+\lambda_j-2k+i+j-2\not\in-\mN$ for $1\leq i<j\leq k$, and

\item[(2)] $\lambda_i-k+i-1\not\in-\mN$ for $1\leq i\leq k$.
\end{itemize}
Here $\lambda=(\lambda_1,\ldots,\lambda_k)$.
\end{prop}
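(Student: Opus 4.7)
The plan is to specialize the Jantzen simplicity criterion for parabolic Verma modules \cite[Thm.~9.12]{hum} to the pair $(\lasp(2k),\lap)$ at hand and then transcribe the resulting arithmetic conditions into the form (1), (2).

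First, I would fix the $C_k$ root-theoretic data. Under the identification $h_{ll}\leftrightarrow\epsilon_l$, the positive roots are $\epsilon_i-\epsilon_j$, $\epsilon_i+\epsilon_j$ ($i<j$) together with $2\epsilon_i$, and the Weyl vector is $\rho=(k,k-1,\ldots,1)$. The operators $r_{ij}^2\in\lap_+$ have weight $\epsilon_i+\epsilon_j$, the operators $\Delta_{ij}\in\lap_-$ have weight $-(\epsilon_i+\epsilon_j)$, and the Levi $\lat\simeq\lagl(k)$ is attached to the roots $\{\epsilon_i-\epsilon_j\}_{i\ne j}$. Hence the positive roots outside the Levi form the two families $\{\epsilon_i+\epsilon_j:1\le i<j\le k\}$ and $\{2\epsilon_i:1\le i\le k\}$.

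Second, I would observe that, since $\lap_-$ consists of standard lowering operators and $V_\lambda$ is built by applying the raising operators $r_{ij}^2$ to $F_\lambda$, the module $V_\lambda$ is a generalized Verma module for the \emph{opposite} Borel. Applying the Chevalley anti-involution carries $V_\lambda$ to a standard parabolic Verma module with parameter $-\lambda$, so that in the pairings governing reducibility one must replace $\lambda+\rho$ by $\lambda-\rho$. Using $(\epsilon_i+\epsilon_j)^\vee=\epsilon_i+\epsilon_j$ for the short roots and $(2\epsilon_i)^\vee=\epsilon_i$ for the long roots, I would then compute, for each positive root $\beta$ outside the Levi,
\begin{align*}
\langle\lambda-\rho,(\epsilon_i+\epsilon_j)^\vee\rangle &= \lambda_i+\lambda_j-2k+i+j-2\quad(i<j),\\
\langle\lambda-\rho,(2\epsilon_i)^\vee\rangle &= \lambda_i-k+i-1.
\end{align*}

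Third, in this opposite-Borel translation, Humphreys' theorem asserts that $V_\lambda$ is simple provided none of the pairings above lies in $-\mN$, because otherwise a singular vector would arise from the reflection $s_\beta$ of the corresponding non-Levi positive root $\beta$. The two root families then produce precisely the two conditions of the proposition. The only delicate point, and really the main obstacle, is the bookkeeping of conventions — the direction of the $\rho$-shift and the sign of the reducibility inequality — between the paper's lowest-weight realisation of $V_\lambda$ (with $\lap$ having nilradical $\lap_-$ spanned by the standard lowering operators $\Delta_{ij}$) and Humphreys' highest-weight formulation. Once that dictionary is fixed, conditions (1) and (2) are literal specialisations of Humphreys' arithmetic simplicity condition to the two orbits of non-Levi positive roots in $C_k$.
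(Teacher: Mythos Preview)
Your approach is exactly the paper's: the proposition is stated there without proof and attributed directly to \cite[9.12, p.~196]{hum}, so specialising the Jantzen simplicity criterion to $(\lasp(2k),\lap)$ is precisely what is intended. Your explicit computation of the pairings for the two families of non-Levi roots fills in the bookkeeping the paper leaves implicit, and the resulting conditions match (1) and (2). One small remark: your description of the convention translation via the Chevalley anti-involution sending $V_\lambda$ to a parabolic Verma module ``with parameter $-\lambda$'' is a little loose; the cleanest way to justify the replacement $\lambda+\rho\rightsquigarrow\lambda-\rho$ and $\mathbb Z_{>0}\rightsquigarrow-\mN$ is simply to pass to the opposite Borel, under which $V_\lambda$ becomes a standard parabolic Verma module of highest weight $\lambda$ (note that in the paper's conventions $\lat_+=\lspan\{h_{ij}:i>j\}$, so $\lambda$ is already the lowest weight of $F_\lambda$ in the usual sense). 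With that adjustment your derivation is complete and correct.
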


\noindent
{\bf Example.} Let $a=(a_1,\ldots,a_k)$ be a~partition and $\tilde a=(a_1+m/2,\ldots,a_k+m/2)$.
Then the conditions of Proposition \ref{p_irred} for $\lambda=\tilde a$ read as
\begin{itemize}
\item[(1')] $a_i+a_j+m-2k+i+j-2\not\in-\mN$ for $1\leq i<j\leq k$, and

\item[(2')] $a_i+(m/2)-k+i-1\not\in-\mN$ for $1\leq i\leq k$.
\end{itemize}
In particular, in the semistable range $m\geq 2k-1$, the conditions (1') and (2') are always satisfied and hence the corresponding module $V_{\tilde a}$ is irreducible.

\subsection*{Acknowledgments}

The author is very grateful for useful advise and suggestions from Vladim\'ir Sou\v cek.

\def\bibname{Bibliography}

\end{document}